\newtheorem*{defn}{Definition}
\theoremstyle{plain}
\newtheorem{theorem}{Theorem}[section]
\newtheorem{corollary}[theorem]{Corollary}
\newtheorem{lemma}[theorem]{Lemma}
\newtheorem{proposition}[theorem]{Proposition}
\newtheorem*{conjecture*}{Conjecture}
\newtheorem*{challenge*}{Open Problem}
\theoremstyle{definition}
\theoremstyle{remark}
\newtheorem*{questions}{Questions}
\newtheorem*{remark}{Remark}
\numberwithin{equation}{section}
\newcommand{\R}{\mathbb R}
\newcommand{\N}{\mathbb N}
\newcommand{\Z}{\mathbb Z}
\newcommand{\C}{\mathbb C}
\def\H{\mathbb H}
\newcommand{\Q}{{\mathbb Q}}
\def\H{\mathbb H}
\newcommand{\bea}{\begin{eqnarray}} 
\newcommand{\eea}{\end{eqnarray}} 
\newcommand{\be}{\begin{equation}} 
\newcommand{\ee}{\end{equation}} 
\newcommand{\benn}{\begin{equation*}} 
\newcommand{\eenn}{\end{equation*}}
\def\({\left(}
\def\){\right)}
\def\k2{\frac{k}{2}}
\begin{document}
\date{\today}
\author{Kathrin Bringmann}
\address{Mathematical Institute\\University of
Cologne\\ Weyertal 86-90 \\ 50931 Cologne \\Germany}
\email{kbringma@math.uni-koeln.de}
\author{Larry Rolen}
\address{Mathematical Institute\\University of
Cologne\\ Weyertal 86-90 \\ 50931 Cologne \\Germany}
\email{lrolen@math.uni-koeln.de}
\thanks{The research of the first author was supported by the Alfried Krupp Prize for Young University Teachers of the Krupp foundation and the research leading to these results has received funding from the European Research Council under the European Union's Seventh Framework Programme (FP/2007-2013) / ERC Grant agreement n. 335220 - AQSER. The second author thanks the University of Cologne and the DFG for their generous support via the University of Cologne postdoc grant DFG Grant D-72133-G-403-151001011, funded under the Institutional Strategy of the University of Cologne within the German Excellence Initiative.}


\begin{abstract}
In analogy with the classical theory of Eichler integrals for integral weight modular forms, Lawrence and Zagier considered examples of Eichler integrals of certain half-integral weight modular forms. These served as early prototypes of a new type of object, which Zagier later called a quantum modular form. Since then, a number of others have studied similar examples. Here we develop the theory in a general context, giving rise to a well-defined class of quantum modular forms. Since elements of this class show up frequently in examples of combinatorial and number theoretical interest, we propose the study of the general properties of this space of quantum modular forms. We conclude by raising fundamental questions concerning this space of objects which merit further study.

\end{abstract}
\title[Eichler integrals and quantum modular forms]{Half-integral weight Eichler integrals and quantum modular forms}
\dedicatory{To Winnie Li, who has been a great inspiration, on the occasion of her birthday}
\maketitle

\section{Introduction and statement of results}

It is well-known that the derivative of a modular form is typically not a modular form. However, thanks to an identity of Bol \cite{Bol}, there exists a canonical differential operator $D^{k-1}\colon M_{2-k}^!\rightarrow M_k^!$ for $k\in\N$, where $D:=\frac1{2\pi i}\frac{\partial}{\partial\tau}$ and $M_{\ell}^!$ denotes the space of weight $\ell$ weakly holomorphic modular forms. Motivated by this, Eichler \cite{Eichler} considered the formal $(k-1)$-st antiderivative of a cusp form. Specifically, if $f(\tau)=\sum_{n\geq1}a_f(n)q^n$ (throughout $q:=e^{2\pi i \tau}$ with $\tau\in\H$) is a cusp form on $\operatorname{SL}_2(\Z)$, then we define the holomorphic Eichler integral by 
$\widetilde{f}(\tau):=\sum_{n\geq1}a_f(n) n^{1-k}q^n.$
Denoting by $|_{\ell}$ the Petersson slash operator in weight $\ell$ (defined in Section 2.1), we easily see from the remarks above that $D^{k-1}(\widetilde f\vert_{2-k}(1-S))=0$ (where $S:=(\begin{smallmatrix}0&-1\\ 1&0\end{smallmatrix}))$. Hence, $\widetilde f\vert_{2-k}(1-S)$ is a polynomial of degree at most $k-2$, called the period polynomial of $f$. Eichler integrals play a fundamental role in the study of integral weight modular forms. For example, as further elaborated on by Shimura \cite{Shimura} and Manin \cite{Manin}, the theory of Eichler integrals provides deep insights into the theory of elliptic curves and critical values of $L$-functions. For an interesting discussion of the general cohomology theory, see also \cite{Bruggeman}. 

Since half-integral weight modular forms (see Section 2.1 for the definition)  also encode deep arithmetic information, it is natural to ask what the analogous Eichler-Shimura theory is for half-integral weight. Here, the situation is more complicated. In particular, the operator $D^{k-1}$ no longer makes sense. In fact, it is a central problem in the theory of harmonic Maass forms to construct a suitable operator which plays a similar role as $D^{k-1}$ in half-integral weight (see, for example, \cite{FractionalDeriv}). The first pioneering examples of half-integral weight Eichler integrals were considered in connection with WRT invariants of 3-manifolds by Lawrence and Zagier \cite{Lawrence-Zagier}. Although the proof is more difficult for general weight modular forms, they formally considered the same Eichler integral as defined above for certain weight $3/2$ theta functions $\vartheta$. Even though it is impossible for $\widetilde{\vartheta}\vert_{\frac12}(1-\gamma)$ (where $\gamma$ is any element of the congruence subgroup of $\vartheta$) to be a polynomial of degree $k-2$ as in the integral weight case, they give a nice characterization of its modular properties as one approaches cusps. In fact, $\widetilde{\vartheta}$ can be extended to $\Q$, and the resulting function $\widetilde{\vartheta}\vert_{\frac12}(1-\gamma)$ becomes real-analytic on $\R\backslash\{\gamma^{-1}\infty\}$. This provides one of the first examples of the burgeoning new theory of quantum modular forms, laid out by Zagier \cite{ZagierQMF}, which we review in Section 2.2. Essentially, a quantum modular form of weight $k$ is a complex-valued function $f$ on $\Q$ whose modular obstructions, or cocycles, $f\vert_k(1-\gamma)$ are ``nicer'' than the original function in some analytic way. For example, $f$ is usually only well-defined on $\Q$, whereas $f\vert_k(1-\gamma)$ typically extends to an open set of $\R$ and is differentiable, smooth, etc.

Since \cite{Lawrence-Zagier}, there has been an explosion of research aimed at constructing examples of quantum modular forms related to non-holomorphic Eichler integrals, see for example \cite{Kimport,Bringmann-Creutzig-Rolen,Bringmann-Folsom-Rhoades,Folsom-Ono-Rhoades,ZagierVass}. For instance, quantum modular forms are closely tied to surprising identities relating the combinatorial generating functions counting ranks, cranks, and unimodal sequences \cite{Folsom-Ono-Rhoades}, and to the general theory of negative index Jacobi forms and Kac-Wakimoto characters \cite{Bringmann-Creutzig-Rolen}. In this paper, we elucidate the general picture in arbitrary half-integral weight. Although the previous proofs depended on the modular forms considered being theta functions, we show that a similar phenomenon is true in general, along the way constructing large families of quantum modular forms. Our main result is the following, where a more detailed definition of quantum modular forms and ``nice properties'' is given in Section 2.2. 
\begin{theorem}\label{mainthm}
If $f\in S_k(N)$ with $k\in\frac12+\N_0$ and $N\in4\N$, then $\widetilde f$ is a quantum modular form of weight $2-k$. In this instance, the ``nice'' property of the cocycle is that for every $\gamma\in\Gamma_0(N)$, $\widetilde f\vert_{2-k}(1-\gamma)$ is real-analytic except at $\gamma^{-1}\infty$.
\end{theorem}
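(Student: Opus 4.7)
The plan is to realize $\widetilde f$ as an Eichler-type integral transform of $f$ and then to use the modularity of $f$ to rewrite the cocycle $\widetilde f\vert_{2-k}(1-\gamma)$ as a single period integral whose analytic behavior is transparent. First, for $\tau\in\H$, a standard Mellin calculation performed term-by-term on the Fourier series of $f$ gives
\[
\widetilde f(\tau) = \frac{(-2\pi i)^{k-1}}{\Gamma(k-1)}\int_{\tau}^{i\infty} f(w)(w-\tau)^{k-2}\,dw,
\]
where one fixes a branch (say, the principal one) of $(w-\tau)^{k-2}$ and takes the path of integration to be vertical. Because $f$ is a cusp form, the integrand decays exponentially at $i\infty$, and since $k-2>-1$ for every half-integer $k\geq 3/2$, the possible singularity at $w=\tau$ remains integrable. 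The identity therefore makes sense also for $\tau\in\Q$, providing the extension of $\widetilde f$ to $\H\cup\Q$ needed for any quantum modular interpretation.

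Next, I would apply $\vert_{2-k}\gamma$ for $\gamma=\smatr{a}{b}{c}{d}\in\Gamma_0(N)$ to the integral representation, change variables $w\mapsto\gamma w$, and invoke the modular transformation $f(\gamma w)=\nu(\gamma)(cw+d)^k f(w)$ with the appropriate Shimura multiplier $\nu$. After checking that the automorphy factors and multipliers (in weights $k$ and $2-k$) cancel, this yields
\[
(\widetilde f\vert_{2-k}\gamma)(\tau) = \frac{(-2\pi i)^{k-1}}{\Gamma(k-1)}\int_{\tau}^{\gamma^{-1}i\infty} f(w)(w-\tau)^{k-2}\,dw,
\]
and subtracting from the previous display gives the desired period-type expression
\[
(\widetilde f\vert_{2-k}(1-\gamma))(\tau) = \frac{(-2\pi i)^{k-1}}{\Gamma(k-1)}\int_{\gamma^{-1}i\infty}^{i\infty} f(w)(w-\tau)^{k-2}\,dw,
\]
in which the contour runs between two cusps of $\Gamma_0(N)$ and no longer has $\tau$ as an endpoint.

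To conclude, for any fixed $\tau_0\in\R\setminus\{\gamma^{-1}\infty\}$ I would deform the contour to a path lying inside $\H$ which stays a positive distance away from $\tau_0$. On the deformed path, the integrand is smooth in $w$ and holomorphic in $\tau$ throughout a complex neighborhood of $\tau_0$, and the exponential decay of $f$ near both cusps makes the integral and all its $\tau$-derivatives uniformly absolutely convergent. Differentiation under the integral sign is then justified and shows that the cocycle extends to a function holomorphic, in particular real-analytic, on a complex neighborhood of $\R\setminus\{\gamma^{-1}\infty\}$.

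The principal technical obstacle will be the bookkeeping of branches in half-integral weight. The automorphy factor $(c\tau+d)^{k-2}$ implicit in $\vert_{2-k}\gamma$, the branch of $(w-\tau)^{k-2}$ appearing in the integrand, and Shimura's multiplier $\nu(\gamma)$ all demand consistent choices, and I will need to verify that they combine so that the cancellation producing the cocycle formula is exact and that no branch cut is crossed by the contour deformation in the final step. This bookkeeping is precisely the feature that distinguishes this setting from Eichler's original integer-weight theory, where the operator $D^{k-1}$ renders such issues moot, and it is where the delicate part of the argument lies.
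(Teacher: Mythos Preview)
Your approach is correct in outline but takes a genuinely different route from the paper. You work directly with the integral representation of $\widetilde f$ on $\H\cup\Q$ and obtain the period-integral form of the cocycle by a change of variables, exactly as in Eichler's integer-weight theory. The paper instead introduces the non-holomorphic Eichler integral
\[
f^*(\tau)=\frac{(-2\pi i)^{k-1}}{\Gamma(k-1)}\int_{\overline\tau}^{i\infty}f(w)(w-\tau)^{k-2}\,dw
\]
on the \emph{lower} half-plane, derives its cocycle there (the same period integral you obtain), and then proves via Mellin transforms and the analytic continuation and functional equation of the twisted $L$-functions $L_f(\zeta_c^d;s)$ that $\widetilde f$ on $\H$ and $f^*$ on $\H^-$ have identical full asymptotic expansions at every rational point (Proposition~2.1). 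Your argument is more economical for Theorem~1.1 as stated and avoids the $L$-function machinery entirely; the paper's detour buys the explicit identification $\widetilde f(d/c)=L_f(\zeta_c^d;k-1)$ (Corollary~1.2) and the complete asymptotic series at each rational, hence \emph{strong} quantum modularity, which your method does not immediately yield. The branch bookkeeping you flag is genuine and is indeed where the work lies; the paper faces essentially the same issue when deriving the transformation of $f^*$ (glossed as ``we easily compute''), though placing $\tau$ in $\H^-$ keeps $w-\tau$ in a fixed half-plane along the whole contour and makes the choice of branch marginally cleaner. Note also that the multipliers in weights $k$ and $2-k$ do not cancel exactly but leave a factor $\chi_{-4}(d)$, as in the paper's equation for $f^*$; this is harmless for quantum modularity but should appear in your cocycle formula.
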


\noindent
{\it Two remarks.}

\noindent 1) In fact, the proof of Proposition 2.1 shows that $\widetilde f$ satisfies a much stricter condition of quantum modularity and, in the language of Zagier, is a strong quantum modular form (cf. Section 2.2). 

\smallskip
\noindent
2)
The quantum modular forms in Theorem \ref{mainthm} were recently described in a different guise using the theory of mock theta functions (see Theorem 1.5 and Lemma 3.1 of \cite{RhoadesChoiLim}). However, we give a different proof of their quantum modularity, as the techniques here are of general interest. 
\smallskip

Although the definition of quantum modular forms by Zagier is (intentionally) vague, the evolution of the literature indicates that it is now worthwhile to split up the types of quantum modular forms which naturally arise into various categories. Theorem 1.1 shows that it makes sense to consider the vector space of Eichler integrals of cusp forms in $S_{k}(N)$ as an interesting space of quantum modular forms, and we propose this new area of study in a series of questions at the end of the paper.
In addition to the results of Theorem \ref{mainthm}, we give explicit formulas for the resulting quantum modular forms. Recall that for a half-integral weight cusp form $f(\tau)=\sum_{n\geq1} a_f(n)q^n$, its $L$-function is defined for $\operatorname{Re} (s)\gg0$ by 
$L_f(s):=\sum_{n\geq1}\frac{a_f(n)}{n^s}.$ 
More generally, consider the twisted $L$-function defined for $\operatorname{Re} (s)\gg0$ and $\frac dc\in\Q$ (throughout we assume that fractions are expressed in lowest terms) by
$L_f(\zeta_c^d;s):=\sum_{n\geq1}\frac{a_f(n)\zeta_c^{dn}}{n^s},$ 
where $\zeta_{a}^b:=e(\frac ab)$ with $e(x):=e^{2\pi ix}$, and define the function $Q_f\colon\Q\rightarrow\C$ by 
\[Q_f\left(\frac dc\right):=L_f\left(\zeta_c^d;k-1\right).\] 
We show that $Q_f$ is a quantum modular form.

\begin{corollary}
Assume the notation of Theorem \ref{mainthm}. Then $Q_f$ is a quantum modular form on $\Gamma_0(N)$, with $Q_f\vert_{2-k}(1-\gamma)$ real-analytic except at $\gamma^{-1}\infty$ for all $\gamma\in\Gamma_0(N)$.
\end{corollary}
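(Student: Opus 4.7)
The plan is to identify $Q_f$ with the restriction of $\widetilde{f}$ to $\Q$ (in the extended sense provided by the proof of Theorem \ref{mainthm}), after which the Corollary is immediate from that theorem. To this end, I would start from the $\Gamma$-function identity
\[
n^{1-k} = \frac{1}{\Gamma(k-1)}\int_0^\infty t^{k-2} e^{-nt}\, dt \qquad (n \geq 1),
\]
substitute termwise into the Fourier series defining $\widetilde{f}$, and interchange summation and integration (justified by the cuspidal decay of $a_f(n)$ together with the exponential factor). After the change of variables $t = 2\pi u$, this yields the integral representation
\[
\widetilde{f}(\tau) = \frac{(2\pi)^{k-1}}{\Gamma(k-1)}\int_0^\infty u^{k-2} f(\tau + iu)\, du,
\]
which makes sense for every $\tau$ in the closure of $\H$, and in particular defines $\widetilde{f}(d/c)$ for all $\frac{d}{c}\in\Q$.

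Next, I would invoke the standard Mellin transform formula for the additive twist of $f$,
\[
L_f\!\left(\zeta_c^d;\, s\right) = \frac{(2\pi)^s}{\Gamma(s)}\int_0^\infty f\!\left(\frac{d}{c} + iy\right) y^{s-1}\, dy,
\]
which also supplies the analytic continuation of $L_f(\zeta_c^d;s)$ to the entire $s$-plane. Specializing $s = k-1$ and comparing with the formula above shows that $Q_f(d/c) = L_f(\zeta_c^d; k-1) = \widetilde{f}(d/c)$, so that $Q_f$ and $\widetilde{f}$ coincide as functions on $\Q$. Since the slash operator $\vert_{2-k}\gamma$ only involves the values of a function on $\Q$ (away from $\gamma^{-1}\infty$), the quantum modularity of $Q_f$ on $\Gamma_0(N)$ together with the real-analyticity of the cocycles $Q_f\vert_{2-k}(1-\gamma)$ on $\R\setminus\{\gamma^{-1}\infty\}$ then follow directly from Theorem \ref{mainthm} applied to $\widetilde{f}$.

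The only subtlety I foresee is the Fubini-type interchange used in deriving the integral representation of $\widetilde{f}$; this requires uniform bounds on $a_f(n)$ and the rapid decay of $f(\tau+iu)$ as $u\to\infty$, both of which are standard consequences of $f$ being a cusp form. Accordingly, I expect this step to be routine rather than a genuine obstacle, and the substance of the Corollary to lie entirely in Theorem \ref{mainthm} combined with the closed-form evaluation $\widetilde{f}|_{\Q} = Q_f$.
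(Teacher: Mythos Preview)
Your proposal is correct, and the overall strategy---identify $Q_f$ with $\widetilde f|_\Q$ and then invoke Theorem~\ref{mainthm}---is exactly the paper's. The difference lies only in how you establish the identity $\widetilde f(d/c)=L_f(\zeta_c^d;k-1)$. The paper simply reads this off as the constant term of the asymptotic expansion (\ref{HolEichAsympFinal}), which was already computed in the proof of Theorem~\ref{mainthm}; the corollary is therefore literally one sentence. You instead rederive the value via the integral representation $\widetilde f(\tau)=\frac{(2\pi)^{k-1}}{\Gamma(k-1)}\int_0^\infty u^{k-2}f(\tau+iu)\,du$ and match it against the Mellin integral for $L_f(\zeta_c^d;s)$ at $s=k-1$. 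This is a perfectly valid and self-contained alternative that avoids appealing to the full asymptotic expansion, at the cost of a few routine justifications (the Gamma identity and the convergence of the integral at $u=0$ need $k>1$, which is harmless since $S_{1/2}=0$; and your claim that the integral extends to ``the closure of $\H$'' should really say ``to $\H\cup\Q$,'' since at irrational reals nothing is asserted). Either way the substance is the same: once $Q_f=\widetilde f|_\Q$, the corollary is immediate.
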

\begin{remark}
We will see in Section 2.4 that $L_f(\zeta_c^d;s)$ has an analytic continuation to $\C$, so that $Q_f$ is well-defined.
\end{remark}

The paper is organized as follows. In Section 2, we review the definitions of half-integral weight modular forms and quantum modular forms, recall an auxiliary non-holomorphic Eichler integral considered in \cite{Lawrence-Zagier}, and reduce the statement of Theorem \ref{mainthm} to a certain claim about asymptotic expansions. We conclude Section 2 by giving several useful facts about $L$-functions needed for the proof of Theorem \ref{mainthm}. In Section 3, we complete the proof of Theorem \ref{mainthm} and Corollary 1.2. We conclude in Section 4 with a list of further questions raised by this paper.
\section*{Acknowledgements}
\noindent The authors are grateful to Robert C. Rhoades for enlightening comments and discussion, as well as the referee for useful suggestions which improved the paper.
\section{Preliminaries}
\subsection{Half-integral weight modular forms}
Here we review some standard definitions and facts concerning modular forms. Firstly, for $k\in\frac12\Z$, recall that the Petersson slash operator is defined for functions $f\colon\H\rightarrow\C$ and $\gamma=(\begin{smallmatrix}a&b\\ c&d\end{smallmatrix})\in\operatorname{SL}_2(\Z)$ (with $\gamma\in\Gamma_0(4)$ if $k\in\frac12+\Z$) by 
\[f\vert_{k}\gamma(\tau):=\begin{cases}(c\tau+d)^{-k}f\left(\frac{a\tau+b}{c\tau+d}\right)&\text{ for }k\in\Z,\\ \varepsilon_d^{2k}\left(\frac cd\right)(c\tau+d)^{-k}f\left(\frac{a\tau+b}{c\tau+d}\right)&\text{ for }k\in\frac12+\Z,\end{cases}\] 
where $(\frac{\cdot}{\cdot})$ denotes the Jacobi symbol and for odd $d$,
\[\varepsilon_d:=\begin{cases}1&\text{ if }d\equiv1\pmod4,\\ i&\text{ if }d\equiv 3\pmod4.\end{cases}\]
We require the following congruence subgroups of $\operatorname{SL}_2(\Z)$:
\begin{equation*}\begin{aligned}\Gamma_0(N)&:=\left\{\begin{pmatrix}a&b\\ c&d\end{pmatrix}\in\operatorname{SL}_2(\Z)\colon N|c\right\},\\
\Gamma_1(N)&:=\left\{\begin{pmatrix}a&b\\ c&d\end{pmatrix}\in\Gamma_0(N)\colon a\equiv d\equiv 1\pmod N\right\}.\end{aligned}\end{equation*}
We then make the following definition.
\begin{defn}
Let $k\in\frac12+\N_0$, $N\in4\N$, and $\chi$ be a Dirichlet character modulo $N$. Then a holomorphic function $f\colon\H\rightarrow\C$ is a cusp form of weight $k$ on $\Gamma_0(N)$ with Nebentypus $\chi$ if the following conditions hold:
\begin{enumerate}
\item For all $\gamma=(\begin{smallmatrix}a&b\\ c&d\end{smallmatrix})\in\Gamma_0(N)$, we have
$f\vert_{k}\gamma=\chi(d)f.$
\item As $\tau$ approaches any cusp of $\Gamma_0(N)$, $f$ decays exponentially fast.
\end{enumerate}

\end{defn}
We denote the space of cusp forms of weight $k$ on $\Gamma_0(N)$ and Nebentypus $\chi$ by $S_k(N,\chi)$. If $\chi$ is trivial, then we also use the notation $S_k(N)$. One may analogously define a space of cusp forms $S_k(\Gamma_1(N))$. We have the following well-known decomposition
\begin{equation}\label{decompgamma1}S_k(\Gamma_1(N))=\bigoplus_{\chi} S_k(N,\chi),\end{equation}
where $\chi$ runs over all even Dirichlet characters modulo $N$. 
Thus, in the study of modular forms on $\Gamma_1(N)$, it is often sufficient to consider modular forms on $\Gamma_0(N)$ with Nebentypus.

\subsection{Quantum modular forms}
In this subsection, we recall some definitions and examples of quantum modular forms.
Following Zagier, we make the following definition. 

\begin{defn}
A function $f\colon \mathbb{Q}\rightarrow\C$ is a quantum modular form of weight $k$ on a congruence subgroup $\Gamma$ if, for all $\gamma\in\Gamma$, the cocycle
$r_{\gamma}:=f|_k(1-\gamma)$
extends to an open subset of\, $\R$ and is analytically ``nice''. Here ``nice'' could mean continuous, smooth, real-analytic, etc. We say that $f$ is a strong quantum modular form if, in addition, $f$ has formal power series attached to each rational number which also have near-modularity properties (see \cite{ZagierQMF} for more details).

\end{defn}

\begin{remark} 

All of the quantum modular forms occurring in this paper have cocycles defined on $\R$ which are real-analytic except at one point. 

\end{remark}

One of the most striking examples of a quantum modular form is given by Kontsevich's ``strange function'' $F(q)$, as studied by Zagier in \cite{ZagierVass}, which is given by 

\begin{equation}\label{kontstrange}F(q):=\sum_{n\geq0}(q;q)_n,\end{equation}
where $(a;q)_n:=\prod_{j=0}^{n-1}(1-aq^j)$ denotes the usual $q$-Pochhammer symbol. This function is strange as it does not converge on any open subset of $\C$, but converges as a finite sum for $q$ any root of unity. Zagier's study of $F$ depends on the ``sum of tails'' identity 
\begin{equation}
\label{sumoftails}
\displaystyle\sum_{n\geq0}\left(\eta(\tau)-q^{\frac1{24}}\left(q;q\right)_n\right)=\eta(\tau)D\left(\tau\right)+\sqrt 6\widetilde{\eta}(\tau),
\end{equation}
\noindent
where $\eta(\tau):=q^{1/24}(q;q)_{\infty}$ and 
$D(\tau):=-\frac12+\sum_{n\geq1}\frac{q^n}{1-q^n}.$
The key observation of Zagier is that in (\ref{sumoftails}), the functions $\eta(\tau)$ and $\eta(\tau)D(\tau)$ vanish of infinite order as $\tau\rightarrow\frac hk$, so at a root of unity $\xi$, $F(\xi)$ is essentially the limiting value of the Eichler integral of $\eta$, which he showed has quantum modular properties \cite{ZagierVass}.

\subsection{Non-holomorphic Eichler integrals}

We now suppose that $f\in S_k(N)$ for $N\in4\N$ and $k\in\frac12+\N_0$. The main idea in studying the modularity properties of half-integral weight Eichler integrals, due to Lawrence and Zagier, is to introduce the non-holomorphic Eichler integral

\[f^*(\tau):=\frac{(-2\pi i)^{k-1}}{\Gamma(k-1)}\int_{\overline{\tau}}^{i\infty}f(w)(w-\tau)^{k-2}dw,\]
defined for $\tau\in\H^-:=\{u+iv\in\C\colon v<0\}.$ The point is that $\widetilde f$ has a $q$-series expansion (e.g., if $f$ is a theta function, then $\widetilde f$ is a partial theta function), while $f^*$ satisfies a nice transformation law. These transformation properties of $f^*$ transfer over to $\widetilde f$ near the real axis. For concreteness, we make the following definition. 

\begin{defn}
Let $f(\tau)$ and $g(\tau)$ be defined for $\tau\in\H$ and $\tau\in\H^-$, respectively. We say that the asymptotic expansions of $f$ and $g$ agree at a rational number $\frac dc$ if there exist $\beta(n)$ such that as $t\rightarrow0^+$,

\[f\left(\frac dc+\frac{it}{2\pi}\right)\sim\sum_{n\geq0}\beta(n)t^n,\quad\quad g\left(\frac dc-\frac{it}{2\pi}\right)\sim\sum_{n\geq0}\beta(n)(-t)^n.\]
\end{defn}
We now look at the transformation properties of $f^*$. We easily compute for 
 $\gamma=\(\begin{smallmatrix}a&b\\ c&d\end{smallmatrix}\)\in\Gamma_0(N)$ and $\tau\in\H^-$ that 

\begin{equation}\label{nonholtransform}f^*(\tau)-\chi_{-4}(d)f^*\vert_{2-k}\gamma(\tau)= \frac{(-2\pi i)^{k-1}}{\Gamma(k-1)}\int_{-\frac dc}^{i\infty}f(w)(w-\tau)^{k-2}dw,\end{equation}
where $\chi_{-4}$ is the Dirichlet character defined by $\chi_{-4}(n):=(\frac{-4}n)$.
This is the key transformation property giving rise to quantum modularity in Theorem \ref{mainthm}. The connection between $\widetilde f$ and $f^*$ is given by the following proposition, whose proof we defer to Section 3 (similar results were explored in less generality in e.g. \cite{Hikami,Lawrence-Zagier,ZagierVass}). 

\begin{proposition}\label{fooprop} Assuming the notation of Theorem \ref{mainthm}, the asymptotic expansions of $\widetilde f$ and $f^*$ agree at any $\frac dc\in\Q$. \end{proposition}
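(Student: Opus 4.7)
The plan is to derive asymptotic expansions of $\widetilde f\bigl(\frac{d}{c}+\frac{it}{2\pi}\bigr)$ and $f^*\bigl(\frac{d}{c}-\frac{it}{2\pi}\bigr)$ separately via the Mellin--Barnes method, and then read off matching coefficients. For $\widetilde f$, combining the $q$-expansion with the identity $e^{-nt}=\frac{1}{2\pi i}\int_{(\sigma)}\Gamma(s)(nt)^{-s}ds$ and interchanging sum and integral yields
\[
\widetilde f\left(\frac{d}{c}+\frac{it}{2\pi}\right)=\frac{1}{2\pi i}\int_{(\sigma)}\Gamma(s)\,L_f(\zeta_c^d;s+k-1)\,t^{-s}\,ds.
\]
Shifting the contour past the simple poles of $\Gamma(s)$ at $s=-n$ for $n\in\N_0$, and using the entire analytic continuation and vertical-strip growth bounds on $L_f(\zeta_c^d;s)$ to be recalled in Section 2.4, produces the asymptotic expansion
\[
\widetilde f\left(\frac{d}{c}+\frac{it}{2\pi}\right)\sim\sum_{n\ge 0}\frac{(-1)^n}{n!}L_f(\zeta_c^d;k-1-n)\,t^n,
\]
which suggests the choice $\beta(n)=(-1)^n L_f(\zeta_c^d;k-1-n)/n!$.

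For $f^*$, the plan is to substitute the $q$-expansion of $f$ into the defining integral, parametrize the vertical path $\overline\tau\to i\infty$, and evaluate each term using the incomplete Gamma function. Tracking the phase factor $(-2\pi i)^{k-1}(i/(2\pi))^{k-1}=1$ produces
\[
f^*\left(\frac{d}{c}-\frac{it}{2\pi}\right)=\frac{1}{\Gamma(k-1)}\sum_{n\ge 1}a_f(n)\zeta_c^{dn}n^{1-k}e^{nt}\,\Gamma(k-1,2nt).
\]
Treating this as a generalized Dirichlet series with kernel $G(x)=e^x\Gamma(k-1,2x)/\Gamma(k-1)$, I would compute the Mellin transform of $G$ --- by Fubini and the substitutions $u=2xy$ followed by $u\mapsto 1/u$ --- obtaining
\[
\widetilde G(s)=\frac{\Gamma(s+k-1)}{\Gamma(k-1)}B(s),\qquad B(s):=\int_0^1 (1+v)^{k-2}v^{s-1}\,dv.
\]
The integral $B(s)$ continues meromorphically to $\C$ with simple poles at $s=-n$ ($n\in\N_0$) of residue $\binom{k-2}{n}$, so Mellin inversion again gives a contour representation for $f^*$, with $\Gamma(s)$ replaced by $\widetilde G(s)$.

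The main obstacle is the final contour shift for $f^*$: the integrand has two families of potential poles, the integer points $s=-n$ from $B(s)$ \emph{and} the half-integer points $s=1-k-m$ from $\Gamma(s+k-1)$. If the latter contributed they would produce forbidden fractional powers $t^{k-1+m}$ in the asymptotic expansion, incompatible with Definition 2.4. The key point is that these fractional contributions cancel, because $L_f(\zeta_c^d;s+k-1)=L_f(\zeta_c^d;-m)$ vanishes at each such half-integer. This follows from the integral representation $L_f(\zeta_c^d;s)=(2\pi)^s\Gamma(s)^{-1}\int_0^\infty f\bigl(\frac{d}{c}+it\bigr)t^{s-1}dt$, in which the integral is entire because $f$ decays exponentially at every cusp while $1/\Gamma(s)$ vanishes at non-positive integers. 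Only the integer poles from $B(s)$ then contribute, and using the identity $\binom{k-2}{n}\Gamma(k-1-n)/\Gamma(k-1)=1/n!$, the residue calculation yields $f^*\bigl(\frac{d}{c}-\frac{it}{2\pi}\bigr)\sim\sum_n\frac{1}{n!}L_f(\zeta_c^d;k-1-n)t^n=\sum_n\beta(n)(-t)^n$, which matches the expansion of $\widetilde f$ as required.
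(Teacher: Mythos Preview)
Your argument is correct and follows the same overall strategy as the paper: compute the Mellin transforms of $\widetilde f\bigl(\frac{d}{c}+\frac{it}{2\pi}\bigr)$ and $f^*\bigl(\frac{d}{c}-\frac{it}{2\pi}\bigr)$, locate their poles and residues, and read off the asymptotic expansions via contour shifting (the paper packages this step as Lemma~3.1). The two technical ingredients --- the pole structure of $\mathcal M(e^t\Gamma(k-1,2t))(s)$ and the vanishing $L_f(\zeta_c^d;-m)=0$ --- are exactly those of the paper's Lemma~3.2 and Lemma~2.2, respectively, and your handling of both is somewhat cleaner. For the Mellin transform, the paper passes through ${}_2F_1$, the incomplete beta function, and the regularized beta function $I(z;a,b)$, and then must prove the nontrivial identity $I(\tfrac12;-n,2-k+n)=1$ by iterating recursion formulas; your substitution giving $B(s)=\int_0^1(1+v)^{k-2}v^{s-1}\,dv$ (which is in fact equal to the paper's $\beta(\tfrac12;s,2-k-s)$ under $t=v/(1+v)$) makes the residues $\binom{k-2}{n}$ immediate and reduces the residue identity to the one-line computation $\binom{k-2}{n}\Gamma(k-1-n)/\Gamma(k-1)=1/n!$. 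For the $L$-vanishing, the paper decomposes $f_{d/c}$ into Nebentypus components and applies the Fricke involution to each; your direct integral representation $L_f(\zeta_c^d;s)=(2\pi)^s\Gamma(s)^{-1}\int_0^\infty f(\tfrac{d}{c}+it)t^{s-1}\,dt$, entire because $f$ decays exponentially at every cusp, bypasses that decomposition entirely. One small point you should make explicit is the vertical-strip decay of $\widetilde G(s)$ needed to justify the contour shift for $f^*$; this follows from the boundedness of $B(s)$ in such strips (subtract the first few Taylor terms of $(1+v)^{k-2}$) together with Stirling for $\Gamma(s+k-1)$.
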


Thus, by Proposition \ref{fooprop}, $\widetilde f$ inherits the same transformation properties as $f^*$ as one approaches the real line. 
\subsection{Properties of $L$-functions}
In this subsection, we recall some basic properties of modular $L$-functions needed for the proof of Theorem \ref{mainthm}. 
Firstly, we require the following lemma. 
\begin{lemma}\label{vanishing}
Let $f\in S_k(N)$, where $k\in\frac12+\N_0$, $N\in4\N$, and $\frac dc\in\Q$. Then $L_f(\zeta_c^d;s)$ has an analytic continuation to $\C$ and $L_f(\zeta_c^d;-m)=0$ for $m\in\N_0$.

\end{lemma}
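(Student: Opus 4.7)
The plan is to realize $L_f(\zeta_c^d;s)$ as a Mellin transform of $f$ along a vertical line above the rational $d/c$, and then exploit the exponential decay of a cusp form at every cusp of $\Gamma_0(N)$ to show that the resulting integral is entire in $s$. Since the integral will naturally come paired with a factor of $1/\Gamma(s)$, whose simple zeros at the non-positive integers are standard, both the analytic continuation and the vanishing statement will fall out simultaneously.

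First I would start from the classical identity $(2\pi)^{-s}\Gamma(s)n^{-s}=\int_0^\infty e^{-2\pi n t}t^{s-1}dt$, sum against $a_f(n)\zeta_c^{dn}$, and interchange the sum and the integral, which is valid for $\operatorname{Re}(s)\gg 0$ by the trivial Hecke bound $a_f(n)=O(n^{k/2})$ for cusp forms, to obtain
\[L_f\!\left(\zeta_c^d;s\right)=\frac{(2\pi)^s}{\Gamma(s)}\int_0^\infty f\!\left(\frac{d}{c}+it\right)t^{s-1}\,dt.\]
I would then show that the integral on the right defines an entire function of $s$ by proving that $f(d/c+it)$ decays faster than any power of $t$ both as $t\to 0^+$ and as $t\to\infty$.

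The behavior as $t\to\infty$ is immediate from the Fourier expansion of $f$ at $i\infty$, which gives $|f(d/c+it)|=O(e^{-2\pi t})$. The decay as $t\to 0^+$ is the essential step: I would use that $d/c$ is itself a cusp of $\Gamma_0(N)$ and choose a matrix $\gamma\in\operatorname{SL}_2(\Z)$ with $\gamma(i\infty)=d/c$, so that the cuspidality of $f\vert_k\gamma$ at $i\infty$ yields a Fourier expansion in $e^{2\pi i \tau/h}$ with no constant term and hence exponential decay. Writing $\gamma^{-1}(d/c+it)$ out explicitly shows that its imaginary part grows like $1/(c^2 t)$ as $t\to 0^+$, and tracking the automorphy factor (whose modulus grows at most like $t^{-k}$) together with the bounded $\varepsilon$-multiplier from the half-integral weight slash operator produces a bound of the form
\[\left|f\!\left(\frac{d}{c}+it\right)\right|=O\!\left(t^{-k}e^{-C/t}\right)\quad\text{as }t\to 0^+\]
for some constant $C>0$.

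Combining the two decay estimates, the Mellin integral converges absolutely for every $s\in\C$ and is holomorphic in $s$ by a standard Morera-type argument. Dividing by $\Gamma(s)$ then provides the analytic continuation of $L_f(\zeta_c^d;s)$ to all of $\C$, and the simple zeros of $1/\Gamma(s)$ at $s=0,-1,-2,\dots$ immediately yield $L_f(\zeta_c^d;-m)=0$ for every $m\in\N_0$. The main obstacle is making the exponential decay at $d/c$ rigorous with the correct half-integral weight automorphy data; once this is in place, the rest is formal Mellin-transform bookkeeping.
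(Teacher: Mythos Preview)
Your argument is correct. Both your proof and the paper's are Hecke-style Mellin-transform arguments, but they are organized differently. The paper first rewrites $L_f(\zeta_c^d;s)$ as $L_{f_{d/c}}(s)$ for the additively twisted form $f_{d/c}(\tau)=\sum a_f(n)\zeta_c^{dn}q^n$, proves $f_{d/c}\in S_k(\Gamma_1(Nc^2))$ via a sieving argument and the decomposition into Nebentypus pieces, and then runs the classical argument along the imaginary axis using the Fricke involution $W_N$ to control the integral near $0$; this also yields the functional equation $\Lambda_g(s)=\Lambda_{g|_kW_N}(k-s)$. You instead integrate $f$ itself along the vertical line over $d/c$ and invoke cuspidality of $f$ at the cusp $d/c$ directly, which is more economical and avoids the detour through $\Gamma_1(Nc^2)$.

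What each buys: your route is shorter and needs no structural input about $f_{d/c}$, only the definition of a cusp form (exponential decay at every cusp). The paper's route produces the functional equation as a byproduct, which it then uses with Phragm\'en--Lindel\"of for the growth estimate in the next lemma. One small caution on your side: the half-integral weight slash operator in the paper is only defined for $\gamma\in\Gamma_0(4)$, so when you pick $\gamma\in\operatorname{SL}_2(\Z)$ with $\gamma(i\infty)=d/c$ you should either phrase the decay purely in terms of the paper's condition (2) in the definition of cusp form, or note that the metaplectic multiplier for a general scaling matrix has modulus one; you already flag this, and it is indeed only bookkeeping.
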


\begin{proof}
We first determine the modularity properties of $f_{\frac dc}(\tau):=\sum_{n\geq1}a_f(n)\zeta_c^{dn}q^n$. Note that
\[f_{\frac dc}(\tau)=\sum_{j=0}^{c-1}\zeta_c^{dj}\sum_{n\equiv j\pmod c}a_f(n)q^n.\]
It is well-known that
$\sum_{n\equiv j\pmod c}a_f(n)q^n\in S_k(\Gamma_1(N c^2)),$ 
and hence $f_{\frac dc}\in S_k(\Gamma_1(N c^2)).$ By the decomposition (\ref{decompgamma1}), we see that $f_{\frac dc}$ can be written as a finite sum $f_{\frac dc}=\sum_{j=0}^{N_0}f_{\frac dc,j}$ where $f_{\frac dc,j}\in S_k(Nc^2,\chi_j)$ and $\chi_j$ is a Dirichlet character modulo $Nc^2$. Clearly $L_f(\zeta_c^d;s)=L_{f_{\frac dc}}(s)$, so it suffices to show that the lemma holds for $L_g(s)$ where $g\in S_{k}(M,\chi)$ with $M\in4\N$ and $\chi$ is a Dirichlet character modulo $M$. For such a $g$, the proof of the analytic continuation and vanishing condition of $L_g$ is essentially classical, due to Hecke (see Theorem 14.7 of \cite{Iwaniec-Kowalski} and Satz 4 of \cite{Petersson}). However, since the multiplier is different in half-integral weight, for completeness we prove it directly.

For this, we first recall the action of the Fricke involution, given by 
\begin{equation}\label{WNDefn}g\vert_kW_N(\tau):=\left(-i\sqrt N\tau\right)^{-k}g\left(-\frac1{N\tau}\right).\end{equation} It is well-known that $g\vert_kW_N\in S_k(M,\chi(\frac{N}{\cdot}))$ (see Section 3 of \cite{Bruinier}). We now consider the completed $L$-function 
\[\Lambda_g(s):=\int_0^{\infty}g\left(\frac{iv}{\sqrt N}\right)v^{s-1}dv.\] By a simple calculation, one sees that the completed $L$-function factors as 
\begin{equation}\label{completedsplitting}\Lambda_g(s)=\left(\frac{\sqrt{N}}{2\pi}\right)^s\Gamma(s)L_g(s).\end{equation} The key property of $\Lambda_g(s)$ is its functional equation. To determine it, we begin by splitting $\Lambda_g(s)$ into two pieces as
\[\Lambda_g(s)=\int_0^{1}g\left(\frac{iv}{\sqrt N}\right)v^{s-1}dv+\int_1^{\infty}g\left(\frac{iv}{\sqrt N}\right)v^{s-1}dv,\]
and make a change of variables in the first integral to obtain
\[\Lambda_g(s)=\int_1^{\infty}g\left(\frac{i}{v\sqrt N}\right)v^{-s-1}dv+\int_1^{\infty}g\left(\frac{iv}{\sqrt N}\right)v^{s-1}dv.\]
Inserting (\ref{WNDefn}), we find
\begin{equation}\label{completedint}\Lambda_g(s)=\int_1^{\infty}g\vert_kW_N\left(\frac{iv}{\sqrt N}\right)v^{k-s-1}dv+\int_1^{\infty}g\left(\frac{iv}{\sqrt N}\right)v^{s-1}dv.\end{equation}
Since both $g$ and $g\vert_kW_N$ are cusp forms, and hence have rapid decay as $v\rightarrow\infty$, (\ref{completedint}) immediately gives an analytic continuation of $\Lambda_g(s)$ to $\C$.
The analytic continuation of $L_g(s)$ now follows immediately from (\ref{completedsplitting}) and the fact that $1/\Gamma(s)$ has no poles. 
The integral representation (\ref{completedint}) also directly implies the functional equation, namely
\begin{equation}\label{functionaleqn}\Lambda_g(s)=\Lambda_{g\vert_kW_N}(k-s).\end{equation}
Now the claims follow, as the Gamma factor $\Gamma(s)$ in (\ref{completedsplitting}) forces $L_g(s)$ to have zeros at non-positive integers, as $\Gamma(s)$ has a pole at these locations whereas the right hand side of (\ref{functionaleqn}) does not.
\end{proof}
Besides the analytic continuation of our $L$-functions, we require a growth estimate as $\vert\operatorname{Im}(s)\vert\rightarrow\infty$. For our purposes, the following basic lemma suffices. Since the proof is a standard application of the functional equation and the Phragm\'en-Lindel\"of principle, we omit the proof. 
\begin{lemma}
\label{Lfunctiongrowth}
For fixed $x\in\R$, $L_{f}(\zeta_c^d;x+it)$ grows at most polynomially in $t$ as $|t|\rightarrow\infty$.
\end{lemma}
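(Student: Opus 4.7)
The plan is to reduce to the case of an ordinary twisted $L$-function of a cusp form with Nebentypus, for which we can invoke the functional equation derived in the proof of Lemma 2.3, and then apply a standard convexity argument.

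First, I would recall the decomposition used in the proof of Lemma 2.3: we have $L_f(\zeta_c^d;s) = L_{f_{d/c}}(s) = \sum_{j=0}^{N_0} L_{f_{d/c,j}}(s)$, where each $f_{d/c,j}\in S_k(Nc^2,\chi_j)$. Since a finite sum of functions of polynomial growth is again of polynomial growth, it suffices to prove the bound for a single $L_g(s)$ with $g\in S_k(M,\chi)$ a half-integral weight cusp form.

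Next, I would establish polynomial growth on two vertical lines symmetric about the center $s=k/2$ of the functional equation. On the right line, say $\operatorname{Re}(s)=\sigma_1$ with $\sigma_1$ large enough that the Dirichlet series for $L_g(s)$ converges absolutely (which follows from the Hecke-type bound $|a_g(n)|=O(n^{k/2})$ for cusp forms), we have $|L_g(\sigma_1+it)| = O(1)$ uniformly in $t$, so certainly polynomial growth holds. On the left line $\operatorname{Re}(s)=k-\sigma_1$, I would apply the functional equation \eqref{functionaleqn}, $\Lambda_g(s)=\Lambda_{g|_k W_N}(k-s)$, together with the identity \eqref{completedsplitting} expressing $\Lambda_g$ in terms of $\Gamma(s)L_g(s)$. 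Since $g|_k W_N$ is again a cusp form, $L_{g|_k W_N}(k-s)$ is bounded on this line; the only unbounded factor in solving for $L_g(s)$ is a ratio of gamma factors, which by Stirling's formula grows at most polynomially in $|t|$ (in fact like $|t|^{k-2\sigma_1+1/2}$ modulo constants).

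Finally, I would invoke the Phragm\'en-Lindel\"of principle on the vertical strip $k-\sigma_1\leq\operatorname{Re}(s)\leq\sigma_1$. By Lemma 2.3, $L_g(s)$ is entire, and for $|t|\to\infty$ it has finite order in this strip (indeed, $L_g(s)$ inherits finite order from the expression one obtains by combining \eqref{completedsplitting} and the rapidly convergent integral representation \eqref{completedint}, which shows $\Lambda_g$ is of finite order). Thus Phragm\'en-Lindel\"of yields polynomial growth throughout the strip, and in particular at any fixed $x\in\R$ (choosing $\sigma_1$ so that $x$ lies in the strip). The main subtlety, rather than an obstacle, is to check that the Phragm\'en-Lindel\"of hypotheses are satisfied, namely the finite-order bound on the boundary strip; this is where the rapid decay of cusp forms entering \eqref{completedint} is crucial, as it ensures $\Lambda_g$ is of order one.
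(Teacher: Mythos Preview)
Your proposal is correct and is precisely the argument the paper has in mind: the paper omits the proof entirely, stating only that it is ``a standard application of the functional equation and the Phragm\'en--Lindel\"of principle,'' which is exactly the route you take via the decomposition into forms with Nebentypus, boundedness in a half-plane of absolute convergence, the functional equation \eqref{functionaleqn} combined with Stirling on the reflected line, and convexity in between. One minor slip: the exponent you quote for the gamma ratio on the left line should be $2\sigma_1-k$ rather than $k-2\sigma_1+\tfrac12$, but this does not affect the conclusion that the growth is polynomial.
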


\section{Proof of Theorem \ref{mainthm} and Corollary 1.2}

In this section, we show Proposition \ref{fooprop}, use it to prove Theorem \ref{mainthm}, and then deduce Corollary 1.2. 
To prove Proposition 2.1, we compute the asymptotic expansions of $\widetilde f$ and $f^*$ separately to show that they agree. This is done using the Mellin transform, defined by $\mathcal M(f)(s):=\int_0^{\infty}f(t)t^{s-1}dt$, which provides a fundamental connection between the asymptotic expansion of one function and the poles of another. Specifically, we require the following result for computing asymptotic expansions, which is a special case of Theorem 4 (i) of \cite{Flajolet}. 
\begin{lemma}
\label{AsympExpExists}
Let $F(x)$ be continuous on $(0,\infty)$ with Mellin transform $\mathcal M(F)(s)$ converging on a right half-plane $\operatorname{Re}(s)>\alpha$. Assume that $\mathcal M(F)(s)$ can be meromorphically continued to the half-plane $\operatorname{Re}(s)>\beta$,
where $\beta<\alpha$, with a finite number of poles $a_0,a_1,\ldots, a_M$ in that half-plane, each simple with residue $\alpha_j$. Further assume that $\mathcal M(F)(s)$ is analytic on the line $\operatorname{Re}(s)=\beta$ and that in the right half-plane $\operatorname{Re}(s)>\beta$, the estimate
\[\mathcal M(F)(s)=O\left(|s|^{-r}\right)\]
holds as $|s|\rightarrow\infty$ for some $r>1$. Then we have the asymptotic expansion
\[F(x)\sim\sum_{j=0}^M\alpha_jx^{-a_j}+O\left(x^{-\beta}\right).\]

\end{lemma}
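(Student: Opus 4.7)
The plan is to apply the Mellin inversion formula and then shift the contour of integration past the listed poles, picking up residues. First, for any $c>\alpha$, Mellin inversion gives
\[
F(x)=\frac{1}{2\pi i}\int_{c-i\infty}^{c+i\infty}\mathcal{M}(F)(s)\,x^{-s}\,ds,
\]
where absolute convergence of the integral (and hence pointwise equality, via the standard Mellin inversion theorem together with the continuity of $F$) follows from the decay bound $\mathcal{M}(F)(s)=O(|s|^{-r})$ with $r>1$ along vertical lines.

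Next, I fix $T>0$ large enough that every pole $a_0,\ldots,a_M$ lies strictly inside the rectangle with vertices $\beta\pm iT$ and $c\pm iT$. Applying Cauchy's residue theorem to the meromorphic function $\mathcal{M}(F)(s)\,x^{-s}$ on this rectangle produces $\sum_{j=0}^{M}\alpha_j\,x^{-a_j}$ from the poles, since each $a_j$ is simple with residue $\alpha_j$ for $\mathcal{M}(F)$ and $x^{-s}$ is entire. Letting $T\to\infty$ kills the two horizontal segments by the decay hypothesis and leaves the identity
\[
F(x)=\sum_{j=0}^{M}\alpha_j\,x^{-a_j}+\frac{1}{2\pi i}\int_{\beta-i\infty}^{\beta+i\infty}\mathcal{M}(F)(s)\,x^{-s}\,ds.
\]

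The remaining integral along $\operatorname{Re}(s)=\beta$ is then estimated trivially: pulling out $|x^{-s}|=x^{-\beta}$ and using that $\mathcal{M}(F)$ is analytic on this line with $O(|t|^{-r})$ decay ($r>1$) produces an absolutely convergent integral in $t$, giving a bound of $O(x^{-\beta})$. Combining the three steps yields the claimed asymptotic expansion.

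The main technical obstacle is the contour shift itself. The hypothesis bounds $\mathcal{M}(F)(s)$ on the half-plane $\operatorname{Re}(s)>\beta$, but to legitimately collapse the horizontal sides of the rectangle as $T\to\infty$ one needs the $O(|s|^{-r})$ estimate to hold \emph{uniformly} across the strip $\beta\le\operatorname{Re}(s)\le c$. Morally this is already encoded in the stated $O$-bound, but a careful write-up would either assume uniformity directly or propagate the half-plane decay into the strip via a Phragm\'en--Lindel\"of convexity argument, using the polynomial bounds on the bounding lines $\operatorname{Re}(s)=\beta$ and $\operatorname{Re}(s)=c$.
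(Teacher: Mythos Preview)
The paper does not prove this lemma at all; it is quoted as a special case of Theorem~4~(i) of Flajolet--Gourdon--Dumas and used as a black box. Your argument via Mellin inversion followed by a contour shift from $\operatorname{Re}(s)=c$ to $\operatorname{Re}(s)=\beta$, collecting residues at the simple poles $a_j$, is precisely the standard proof one finds in that reference, so there is nothing to compare.

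One small point worth tightening: the hypothesis gives the $O(|s|^{-r})$ decay only for $\operatorname{Re}(s)>\beta$, yet you invoke it on the line $\operatorname{Re}(s)=\beta$ when bounding the final integral. Either shift instead to $\operatorname{Re}(s)=\beta+\varepsilon$ with $\varepsilon>0$ small enough to avoid the poles (yielding $O(x^{-\beta-\varepsilon})$ for every such $\varepsilon$, which is what the asymptotic statement actually needs), or note that analyticity on the closed strip together with the uniform interior bound passes to the boundary. You already flagged the companion issue of uniformity across the strip for the horizontal segments, and your proposed fix via Phragm\'en--Lindel\"of is the right one if the $O$-constant is not assumed uniform.
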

For later use, we also recall that for a general function $f$ with $f(x)=O(x^{\alpha}) \text{ as }x\rightarrow0^+$ and $f(x)=O(x^{\beta})\text{ as }x\rightarrow\infty,$
$\mathcal M(f)(s)$ converges in the strip $-\alpha\leq\operatorname{Re}(s)\leq-\beta$.

In order to apply Lemma \ref{AsympExpExists} to the function $\widetilde f(\frac dc+\frac{it}{2\pi})$, we first compute its Mellin transform by integrating termwise

\begin{equation}\label{neednow}\begin{aligned}\mathcal{M}\left(\widetilde f\left(\frac dc+\frac{it}{2\pi}\right)\right)\left(s\right)&=\int_0^{\infty}\sum_{n\geq1}n^{1-k}\zeta_c^{nd}a_f(n)e^{-nt}t^{s-1}dt &=\Gamma(s)L_f\left(\zeta_c^d;k-1+s\right).\end{aligned}\end{equation}
We next show that this Mellin transform satisfies the conditions of Lemma \ref{AsympExpExists}. Firstly, note, using the right-hand side of (\ref{neednow}), that $\mathcal{M}(\widetilde f(\frac dc+\frac{it}{2\pi}))(s)$ is convergent on some right half-plane, and by Lemma \ref{vanishing}, it has an analytic continuation to $\C$ with poles only at non-positive integers (coming from the Gamma factor). 
To estimate the growth of the Mellin transform in vertical strips, we first recall Stirling's estimate (see 5.11.9 of \cite{Nist}):
\begin{equation}\label{Stirling}\left\vert\Gamma(x+iy)\right\vert=\sqrt{2\pi}|y|^{x-\frac12}e^{-\frac{\pi|y|}2}\ \text{  as }|y|\rightarrow\infty.\end{equation}
By Lemma 2.3 and (\ref{Stirling}), the Mellin transform is thus of rapid decay for fixed $\operatorname{Re}(s)$ as $\vert\operatorname{Im}(s)\vert\rightarrow\infty$. 
Hence, letting $\beta\rightarrow -\infty$ in Lemma \ref{AsympExpExists}, we directly find
\begin{equation}\label{HolEichAsympFinal}\widetilde f\left(\frac dc+\frac{it}{2\pi}\right)\sim\sum_{n\geq0}\frac{(-1)^n}{n!}L\left(\zeta_c^d;k-1-n\right)t^n\text{ as }t\rightarrow0^+,\end{equation}

We now turn to computing the asymptotic expansion of $f^*(\frac dc-\frac{it}{2\pi})$ as $t\rightarrow0^+$. We begin by expanding $f^*$.
By a change of variables, we have for $n>0$ and $\tau=u+iv$ with $v<0$

\[\int_{\overline{\tau}}^{i \infty}e^{2\pi i nw}(w-\tau)^{k-2}dw=i^{k-1}(2\pi n)^{1-k}e^{2\pi i n \tau}\Gamma(k-1,4\pi n |v|),\]
where $\Gamma(s,x):=\int_x^{\infty}t^{s-1}e^{-t}dt$ denotes the incomplete Gamma function. Thus, integrating term-by-term, we obtain

\begin{equation}\label{fstarexp}f^*(\tau)=\frac1{\Gamma(k-1)}\sum_{n\geq1}a_f(n) n^{1-k}e^{2\pi i n\tau}\Gamma(k-1,4\pi n|v|).\end{equation}
Taking the Mellin transform of the right hand side of (\ref{fstarexp}), we obtain

\begin{equation}\label{mellinnonholeich}\begin{aligned}\mathcal{M}\left(f^*\left(\frac dc-\frac{it}{2\pi}\right)\right)(s)&=\frac1{\Gamma(k-1)}L_f\left(\zeta_c^d;k-1+s\right)\mathcal{M}\left(e^t\Gamma(k-1,2t)\right)(s).\end{aligned}\end{equation}

In order to use Lemma \ref{AsympExpExists} to compute the asymptotic expansion of $f^*$, we first determine the location and residues of the poles of $\mathcal{M}(f^*(\frac dc-\frac{it}{2\pi}))(s)$ using the representation on the right-hand side of (\ref{mellinnonholeich}).

\begin{lemma}\label{crap}
Assuming the notation above, the following are true:
\begin{enumerate}
\item The function $\mathcal{M}(f^*(\frac dc-\frac{it}{2\pi}))(s)$ has a simple pole at $s=-n \in-\N_0$ with residue 
\[\frac{ L_f\left(\zeta_c^d;k-1-n\right)}{n!}.\]
\item The function $\mathcal{M}(f^*(\frac dc-\frac{it}{2\pi}))(s)$ is holomorphic for $s\not\in-\N_0$. 
\end{enumerate}
\end{lemma}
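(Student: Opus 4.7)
The plan is to exploit the factorization of $\mathcal{M}(f^*(\tfrac dc-\tfrac{it}{2\pi}))(s)$ recorded on the right-hand side of (\ref{mellinnonholeich}). Since $L_f(\zeta_c^d;k-1+s)$ is entire by Lemma \ref{vanishing}, all potential singularities must come from the auxiliary Mellin transform $I(s):=\mathcal{M}(e^t\Gamma(k-1,2t))(s)$, so the main task is to determine $I(s)$ explicitly and describe its polar structure.

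To evaluate $I(s)$, I would begin in the half-plane $\operatorname{Re}(s)>0$, where the defining integral converges absolutely since $e^t\Gamma(k-1,2t)$ decays like $t^{k-2}e^{-t}$ at infinity and tends to $\Gamma(k-1)$ at $0$. Substituting $\Gamma(k-1,2t)=\int_{2t}^\infty u^{k-2}e^{-u}\,du$ and applying Fubini (justified by absolute convergence of the positive double integral), followed by the successive changes of variable $u=2tv$, $w=2v-1$, and $y=1/w$, one obtains the closed form
\[
I(s)=\Gamma(s+k-1)\int_0^1 y^{s-1}(1+y)^{k-2}\,dy.
\]

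To continue the integral factor meromorphically to $\C$, I would invoke Taylor's theorem to write $(1+y)^{k-2}=\sum_{m=0}^{N-1}\binom{k-2}{m}y^m+R_N(y)$ with $R_N(y)=O(y^N)$ as $y\to 0^+$. Integrating termwise yields $\sum_{m=0}^{N-1}\binom{k-2}{m}/(s+m)$, which has simple poles at $s=0,-1,\ldots,-(N-1)$ with residues $\binom{k-2}{n}$, while $\int_0^1 y^{s-1}R_N(y)\,dy$ is holomorphic for $\operatorname{Re}(s)>-N$. Since $k\in\tfrac12+\N_0$, the poles of $\Gamma(s+k-1)$ sit at the half-integers $s=1-k-m$, which are disjoint from $-\N_0$; hence $I(s)$ has simple poles precisely at $s=-n$ for $n\in\N_0$, with residue $\Gamma(k-1-n)\binom{k-2}{n}$.

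Multiplying through by $L_f(\zeta_c^d;k-1+s)/\Gamma(k-1)$ and applying the elementary identity $\binom{k-2}{n}=\Gamma(k-1)/(n!\,\Gamma(k-1-n))$ collapses the residue at $s=-n$ to $L_f(\zeta_c^d;k-1-n)/n!$, which is claim (1). For claim (2), the only other candidate singularities of the product are the half-integer poles of $\Gamma(s+k-1)$ at $s=1-k-m$; but there $L_f(\zeta_c^d;k-1+s)=L_f(\zeta_c^d;-m)$ vanishes by Lemma \ref{vanishing}, so those apparent singularities are removable. The only real care needed is in justifying the Fubini exchange and the termwise integration near the boundary of convergence; both reduce to routine absolute-convergence or Taylor-remainder estimates, so I do not anticipate a substantive obstacle.
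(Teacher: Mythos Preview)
Your argument is correct, and it reaches the same destination as the paper by a noticeably cleaner path. The paper also starts from (\ref{mellinnonholeich}) but evaluates $\mathcal M(e^t\Gamma(k-1,2t))(s)$ via a chain of special-function identities: first an integral formula expressing it through $_2F_1$, then a rewriting as $\beta(\tfrac12;s,2-k-s)\Gamma(s+k-1)$, then Euler reflection to pull out a $\Gamma(s)$ factor, and finally several recurrences for the regularized incomplete beta function to verify $I(\tfrac12;-n,2-k+n)=1$, which is what pins down the residue. Your Fubini computation lands directly on $\Gamma(s+k-1)\int_0^1 y^{s-1}(1+y)^{k-2}\,dy$ (which, after the substitution $t=y/(1+y)$, is exactly the paper's $\beta(\tfrac12;s,2-k-s)\Gamma(s+k-1)$), and the Taylor-expansion argument reads off the residues $\binom{k-2}{n}\Gamma(k-1-n)=\Gamma(k-1)/n!$ without any hypergeometric or beta-function machinery. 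For part (2) the two proofs coincide in spirit: both observe that the remaining candidate poles come from $\Gamma(s+k-1)$ at half-integers $s=1-k-m$ and are killed by the trivial zeros $L_f(\zeta_c^d;-m)=0$ from Lemma \ref{vanishing}. Your route is shorter and more self-contained; the paper's route has the minor advantage of making the connection to classical special functions explicit, which feeds into the subsequent growth estimate in vertical strips via (\ref{beta2f1}) and (\ref{Pfaff}).
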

\begin{proof}
(1) We begin by rewriting the Mellin transform $\mathcal M(e^t\Gamma(k-1,2t))(s)$ in a more convenient form. 
Namely, we claim that for $\text{Re}(s)>0$ and $\text{Re}(k+s-1)>0$, we have
\begin{equation}\label{bleh}\mathcal M\left(e^t\Gamma\left(k-1,2t\right)\right)(s)=\beta\left(\frac12;s,2-k-s\right)\Gamma\left(k-1+s\right),\end{equation}
where, for $\operatorname{Re}(a)>0$, $\operatorname{Re}(b)>0$, and $0\leq z\leq 1$, 
\begin{equation*}\beta(z;a,b):=\int_0^zt^{a-1}(1-t)^{b-1}\mathrm dt\end{equation*}
is the incomplete beta function. To see (\ref{bleh}), define for $|z|<1$ the Gaussian hypergeometric series 
\begin{equation}\label{2F1Defn} _2F_1\left(a,b;c;z\right):=\sum_{n\geq0}\frac{(a)_n(b)_nz^n}{(c)_nn!},\end{equation}
where $(x)_n:=\prod_{j=0}^{n-1}(x+j)$ is the Pochhammer symbol (note that $ _2F_1$ is sometimes denoted by $F$ in \cite{Nist}). Now use (8.14.6) of \cite{Nist}, which states that for $\text{Re}(r)>-1,\text{ Re}(a+b)>0,$ and $\text{ Re}(a)>0,$
\[\int_0^{\infty}x^{a-1}e^{-rx}\Gamma(b,x) dx=\frac{\Gamma(a+b)}{a(1+r)^{a+b}} \text{  }_2F_1\left(1,a+b;1+a;\frac r{1+r}\right).\]
Choosing $b=k-1$, $a=s$, and $r=-\frac12$, we obtain, after a change of variables,
\begin{equation}\label{labelfoo}\mathcal M\left(e^t\Gamma(k-1,2t)\right)(s)=2^{k-1}\frac{\Gamma(s+k-1)}{s}\text{}_2F_1(1,s+k-1;1+s;-1).\end{equation}
Here we remark that the specialization of $ _2F_1$ in (\ref{labelfoo}) is defined by analytic continuation, as discussed in Section 15.2 of \cite{Nist}.  We then use (8.17.9) of \cite{Nist} which states that 
\begin{equation}\label{beta2f1}\beta(x;a,b)=\frac{x^a(1-x)^{b-1}}{a}\text{}_2F_1\left(1,1-b;a+1;\frac{x}{x-1}\right).\end{equation}
Plugging in $x=\frac12$, $a=s$, and $b=2-k-s$ and using (\ref{labelfoo}) gives (\ref{bleh}). 

To determine the poles and residues of the incomplete beta function, we make use of the beta function $\beta(a,b):=\beta(1;a,b)$ and the regularized beta function $I(z;a,b):=\frac{\beta(z;a,b)}{\beta(a,b)}.$ Using the standard fact that \begin{equation}\label{betafactor}\beta(a,b)=\frac{\Gamma(a)\Gamma(b)}{\Gamma(a+b)},\end{equation}
along with the Euler reflection formula for $\Gamma(s)$, we obtain 

\begin{equation}\label{mellinrewrite}\begin{aligned} \beta\left(\frac12;s,2-k-s\right)\Gamma\left(k-1+s\right)=
 \Gamma(s)\frac{\pi I\left(\frac12;s,2-k-s\right)}{\Gamma(2-k)\sin\left(\pi(k-1+s)\right)}.\end{aligned}\end{equation}
Combining (\ref{mellinnonholeich}), (\ref{bleh}), and (\ref{mellinrewrite}) and again using Euler's reflection formula, we find that 
\begin{equation*}\mathcal{M}\left(f^*\left(\frac dc-\frac{it}{2\pi}\right)\right)(s)= L_f\left(\zeta_c^d;k-1+s\right)  \Gamma(s)\frac{(-1)^{k+\frac12} I\left(\frac12;s,2-k-s\right)}{\sin\left(\pi(k-1+s)\right)}.\end{equation*}
We next specialize to $s=-n$. Note that $\sin\left(\pi(k-1-n)\right)=(-1)^{k+\frac12+n}$. Using Lemma 2.2 again, Lemma 3.1 (1) thus follows immediately once we have established that
\begin{equation}\label{foobar}I\left(\frac12;-n,2-k+n\right)=1.\end{equation}

To see (\ref{foobar}), first note the following two identities, stated in (26.5.2) and (26.5.15) of \cite{AS}, respectively:
\begin{equation}\label{foo1}I(x;a,b)=1-I(1-x;b,a),\end{equation}
\begin{equation} \label{recurrencebetareg} I(x;a,b)=\frac{\Gamma(a+b)}{\Gamma(a+1)\Gamma(b)}x^a(1-x)^{b-1}+I(x;a+1,b-1).\end{equation}
For $n\in\N_0$, (\ref{recurrencebetareg}) implies that 
\begin{equation}\label{anotherrecurr}I\(\frac12;2-k+n,-n\)=I\(\frac12;2-k,0\),\end{equation}
 since the factor $\frac{\Gamma(2-k)}{\Gamma(2-k+1)\Gamma(n)}$
 is zero. Using (\ref{recurrencebetareg}) again implies that 
\begin{equation}\label{lastrecurr}I\(\frac12;2-k,0\)=I\(\frac12;1-k,1\)-2^{k-1}.\end{equation}
The right-hand side of (\ref{lastrecurr}) may now be evaluated directly. For this, we note that 
$ _2F_1(1,0;2-k;-1)=1$, as all but the first term in (\ref{2F1Defn}) vanish, and hence, by (\ref{beta2f1}), $\beta(\frac12;1-k,1)=\frac{2^{k-1}}{1-k}.$ Although this specialization is on the border of the region of convergence $|z|<1$ of $ _2F_1$, by Abel's Lemma the value of the analytic continuation to a point on the boundary of convergence is the value at that point, assuming it exists. Now note that by (\ref{betafactor}), $\beta(1-k,1)=(1-k)^{-1}$, and hence by the definition of $I$, we have that $I(\frac12;1-k,1)=2^{k-1}$. Thus, by (\ref{anotherrecurr}) and (\ref{lastrecurr}), we have $I(\frac12;2-k+n,-n)=0$.
But then (\ref{foo1}) gives

\begin{equation*}I\left(\frac12;-n,2-k+n\right)=1-I\left(\frac12;2-k+n,-n\right) =1,\end{equation*}
proving (\ref{foobar}). 

\noindent
(2) Before proving that $\mathcal{M}(f^*(\frac dc-\frac{it}{2\pi}))(s)$ is holomorphic for $s\not\in-\N_0$, we first analyze where potential poles could arise. To do this, note that using 
(\ref{mellinnonholeich}), (\ref{labelfoo}), and the
standard Pfaff transformation formula
\begin{equation}\label{Pfaff} _2F_1(a,b,c;z)=(1-z)^{-b}\, _2F_1\(b,c-a;c;\frac z{z-1}\)\end{equation}
 together gives
\begin{equation}\label{simpler}\mathcal{M}\(f^*\(\frac dc-\frac{it}{2\pi}\)\)(s)=\frac{1}{2^s\Gamma(k-1)}L_f\(\zeta_c^d;k-1+s\)\Gamma(s+k-1)\frac{ _2F_1\(s+k-1,s;1+s;\frac12\)}s.\end{equation}
Hence, using the fact that the only possible poles of $ _2F_1(a,b;c;z)$ for $|z|<1$ occur if $c\in-\N_0$, together with Lemma 2.2, we see that $\mathcal M(f^*(\frac dc-\frac{it}{2\pi}))(s)$ can only possibly have poles for $s\in-\N_0$ or for $s\in\frac12+\Z$ with $s\leq 1-k$. 

Thus, to complete the proof of (2)  we just need to prove that there are no poles for $s\in\frac12+\Z$ with $s\leq 1-k$. We thus fix $s$ to be such a half-integer. Now it suffices by (\ref{simpler}) to show that 
$L_f\(\zeta_c^d;k-1+s\)=0$, as the only potential pole arises from a simple pole in the function $\Gamma(s+k-1)$.
However, by the assumption on $s$, we note that $k-1+s\in-\N_0$, so that the desired vanishing condition is stated in Lemma 2.2.
\end{proof}
We are now ready to provide the asymptotic expansion of $f^*$, which, by comparison with (\ref{HolEichAsympFinal}), proves Proposition 2.1.
One may easily show, using (\ref{beta2f1}), (\ref{Pfaff}), and (\ref{Stirling}), that $\beta(\frac12;s,2-k-s)$ is bounded in vertical strips for $\vert\operatorname{Im}(s)\vert$ large enough.
By this fact, together with (\ref{mellinnonholeich}), (\ref{bleh}), Lemma \ref{Lfunctiongrowth}, and (\ref{Stirling}), we find that $\mathcal{M}(f^*(\frac dc-\frac{it}{2\pi}))(s)$ is of rapid decay in vertical strips as $\vert\operatorname{Im}(s)\vert\rightarrow\infty$.
Hence, we can directly plug Lemma \ref{crap} into Lemma \ref{AsympExpExists} to find that
\begin{equation}\label{nonholfinalexp} f^*\left(\frac dc-\frac{it}{2\pi}\right)\sim\sum_{n\geq0}\frac{1}{n!}L\left(\zeta_c^d;k-1-n\right)t^n\text{ as }t\rightarrow0^+,\end{equation}
which establishes Proposition 2.1.

We now have all of the pieces needed to prove Theorem \ref{mainthm}.
\begin{proof}[Proof of Theorem \ref{mainthm}]
By (\ref{nonholfinalexp}), we find that $f^*(\frac dc-\frac{it}{2\pi})$ has a well-defined limit as $t\rightarrow0^+$, namely, 
$\lim_{t\rightarrow0^+}f^*(\frac dc-\frac{it}{2\pi})=L(\zeta_c^d;k-1).$
By (\ref{nonholtransform}), we see that $f^*$ is a quantum modular form, as for $x\in\R$, the cocycle 
$\int_{-\frac dc}^{i\infty}f(w)(w-x)^{k-2}dw$ 
is real-analytic on $\R\setminus\{-\frac dc\}$. By  (\ref{HolEichAsympFinal}), we see that $\widetilde f$ also extends to a function on $\Q$ with the same values as $f^*$, and hence is a quantum modular form. The claim that it is a strong quantum modular form follows from Proposition \ref{fooprop}. 
\end{proof}
The proof of Corollary 1.2 is now straightforward.
\begin{proof}[Proof of Corollary 1.2]
As in the proof of Theorem \ref{mainthm}, we see by (\ref{HolEichAsympFinal}) that  $\widetilde f$ is a quantum modular form, and that its values at the rational point $\frac dc$ is $L_f(\zeta_c^d;k-1)$. 
\end{proof}

\section{Questions and outlook}
Theorem 1.1 and Corollary 1.2 provide a canonical family of quantum modular forms, which gives a different perspective on the quantum modular forms defined in \cite{RhoadesChoiLim}. As discussed in Sections 1 and 2.2, this family includes many examples in the literature, such as Kontsevich's strange function (\ref{kontstrange}) \cite{ZagierVass}, quantum modular forms arising from specializations of the crank and rank generating functions \cite{Folsom-Ono-Rhoades}, and decomposition formulas for Kac-Wakimoto characters \cite{Bringmann-Creutzig-Rolen}. Due to the number of connections with combinatorics and number theory, we propose that studying these quantum modular forms in more detail is worthwhile. Thus, we make the following definition.
\begin{defn}
We say that a quantum modular form is an Eichler quantum modular form if its values on $\Q$ are equal to the radial limits from inside the unit disk of $\widetilde f$ for some cusp form $f$. 
\end{defn}
The results presented here leave many open questions about the structure of these quantum modular forms. We conclude by giving a few natural questions that merit further investigation.
\begin{questions}
\begin{enumerate}
\item If we replace the cusp form $f$ by a holomorphic modular form which is not cuspidal, then an inspection of the proof of Theorem \ref{mainthm} shows that the same calculations formally hold, but the Eichler integral has a mild singularity at cusps at which $f$ is not cuspidal. Is there a natural (and nontrivial) way to subtract a canonical holomorphic function which gives an associated quantum modular form? 
\item If $f$ is a unary theta function, then $\widetilde f$ is a partial theta function. Can one add a non-holomorphic completion term to the partial theta function which corrects the modularity transformations on the upper half plane as well as on $\Q$? More generally, for any cusp form $f$, are the modularity transformations proven in Theorem \ref{mainthm} a consequence of a more general non-holomorphic completion on the upper or lower half plane?
\item What are the arithmetic properties of Eichler quantum modular forms? For example, inspired by the congruences proven in \cite{Andrews}, is there a general theory of congruences for the coefficients of $\widetilde{f}(1-q)$ (note the slight abuse of notation here) or for the coefficients of the asymptotic expansion of $\widetilde f$ as $\tau$ approaches a given root of unity? In particular, if $f$ is a theta function, can one show in a uniform manner whether or not there are infinitely many such congruences, or does there exist a ``Sturm-type'' theorem which gives a finite condition to verify congruences?
\item What does the Shimura correspondence tell us about the structure of half-integral weight Eichler quantum modular forms, given that the theory of integral weight Eichler integrals is much simpler? 

\end{enumerate}
\end{questions}

\end{document}